\newtheorem{theorem}{Theorem}
\newtheorem{lemma}[theorem]{Lemma}
\newtheorem{proposition}[theorem]{Proposition}
\begin{document}
\title{ Wreath products by a Leavitt path algebra and affinizations}

\author{Adel Alahmadi}
\address{Department of Mathematics, King Abdulaziz University, P.O.Box 80203, Jeddah, 21589, Saudi Arabia}
\email{adelnife2@yahoo.com}
\author{Hamed Alsulami}
\address{Department of Mathematics, King Abdulaziz University, P.O.Box 80203, Jeddah, 21589, Saudi Arabia}
\email{hhaalsalmi@kau.edu.sa}

\keywords{Wreath product, Leavitt path algebra, Cohn algebra}

\maketitle

\begin{abstract}
We introduce ring theoretic constructions that are similar to the construction of wreath product of groups. In particular, for a given graph  $\Gamma=(V,E)$ and an associate algebra $A,$ we construct an algebra $B=A\, wr\, L(\Gamma)$ with the following property: $B$ has an ideal $I$,which consists of (possibly infinite) matrices over $A$, $B/I\cong L(\Gamma)$, the Leavitt path algebra of the graph $\Gamma$.
\medskip
\par Let $W\subset V$ be a hereditary saturated subset of the set of vertices [1], $\Gamma(W)=(W,E(W,W))$ is the restriction of the graph $\Gamma$ to $W$, $\Gamma/W$ is the quotient graph [1]. Then $L(\Gamma)\cong L(W) wr L(\Gamma/W)$.
\medskip
\newline As an application we use wreath products to construct new examples of (i) affine algebras with non-nil Jacobson radicals, (ii) affine algebras with non-nilpotent locally nilpotent radicals.
\end{abstract}

\vspace{1cm}
\section{Actions by Semigroups}
 ~\par Let $S$ be a semigroup with zero, that is, there exists an element $s_{0}$ such that $s_{0}S=\{s_{0}\}= Ss_{0}$. Suppose that the semigroup $S$ acts on a set $X$ both on the left and on the right, that is, there are mappings $S\times X\longrightarrow X$, $X \times S \longrightarrow X$ such that $s_{1}(s_{2}x)=(s_{1}s_{2})x$,$(xs_{1})s_{2}=x(s_{1}s_{2})$ for arbitrary elements $s_{1},s_{2}\in S$;$x \in X$.
 \medskip
 \par We assume that $X$ is a set with zero, that is, there exists an element $x_{0}$ such that $s x_{0}=x_{0},x_{0}s=x_{0},x s_{0}=s_{0}x=x_{0}$ for arbitrary elements $s \in S,x \in X$.

 Suppose further that the left and right actions of the semigroup $S$ on $X$ have the following properties. For arbitrary elements $s\in S,$ $x\in X$:
 \begin{enumerate}
   \item [(1)] if $s(xs)=x_0$ then $xs=x_0.$ If $s(xs)\neq x_0$ then $s(xs)=x;$
   \item [(2)] if $(sx)s=x_0$ then $sx=x_0.$ If $(sx)s\neq x_0$ then $(sx)s=x.$
 \end{enumerate}

For a field $F$ let $F_{0}[S]$ denote the reduced semigroup algebra, $F_{0}[S]=F[S]/F s_{0}$.
 \medskip
\par Let $A$ be an $F$-algebra. Let $M_{X\times X} (A)$ denote the algebra of possibly infinite $X\times X$ - matrices over $A$ with only finitely many nonzero entries.
For elements $s \in S;x,y \in X;a \in A$ let $a_{x,y}$ denote the matrix having  $a$ in the $x$-th row and $y$-th column and zeros in all other entries.

\bigskip

\par We will define an algebra structure on $F_{0}[S]+ M_{X\times X}(A)$. For arbitrary elements $s \in S;x,y \in X;a \in A$ we define

\[ sa_{x,y} = \left\{
   \begin{array}{l l}
     0, & \quad \text{if $sx=x_{0}$ }\\
     a_{sx,y}, & \quad \text{if $sx \neq x_{0}$ }
   \end{array} \right.\]

\[ a_{x,y}s = \left\{
   \begin{array}{l l}
     0, & \quad \text{if $ys=x_{0}$ }\\
     a_{x,ys}, & \quad \text{if $ys \neq x_{0}.$ }
   \end{array} \right.\]

In particular, $F_{0}[S]a_{x_{0},x}=a_{x,x_{0}}F_{0}[S]=(0)$.
\medskip
\begin{lemma}\label{lem1}
The algebra $F_{0}[S]+ M_{X\times X}(A)$ is associative.
\end{lemma}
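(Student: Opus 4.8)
The plan is to reduce associativity to a finite check on spanning elements and then treat the resulting cases one at a time; the action axioms (1) and (2) will be needed in exactly one of them, and that is where I expect the only real difficulty to lie.

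First I would note that the multiplication is $F$-bilinear in each argument, so it suffices to verify $(pq)r=p(qr)$ when each of $p,q,r$ is a spanning element of one of the two types: a semigroup element $s\in S$ (viewed in $F_{0}[S]$), or a matrix unit $a_{x,y}$ with $a\in A$ and $x,y\in X$. Writing $\mathsf{s}$ for a semigroup-type factor and $\mathsf{m}$ for a matrix-type factor, there are eight patterns. The pattern $\mathsf{sss}$ is associativity in the reduced semigroup algebra $F_{0}[S]$, which holds because $S$ is associative, while $\mathsf{mmm}$ is ordinary associativity of matrix multiplication in $M_{X\times X}(A)$; both are immediate.

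Next I would clear the patterns in which the two matrix indices are never contracted against one another. For $\mathsf{ssm}$ one checks that $(s_{1}s_{2})a_{x,y}$ and $s_{1}(s_{2}a_{x,y})$ both equal $a_{(s_{1}s_{2})x,\,y}$ when $(s_{1}s_{2})x\neq x_{0}$ and vanish otherwise, using only the left-action identity $(s_{1}s_{2})x=s_{1}(s_{2}x)$ and $s_{1}x_{0}=x_{0}$; the pattern $\mathsf{mss}$ is symmetric via the right action. For $\mathsf{smm}$, left multiplication by $s$ touches only the row index of the first factor while matrix multiplication contracts the inner indices, so both groupings give $\delta_{y,u}(ab)_{sx,v}$ (and $0$ when $sx=x_{0}$); $\mathsf{mms}$ is symmetric. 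The pattern $\mathsf{sms}$ is equally harmless: $s_{1}$ acts on the row and $s_{2}$ on the column independently, so $(s_{1}a_{x,y})s_{2}$ and $s_{1}(a_{x,y}s_{2})$ both equal $a_{s_{1}x,\,ys_{2}}$ when $s_{1}x\neq x_{0}$ and $ys_{2}\neq x_{0}$, and $0$ otherwise. None of these five mixed cases uses (1) or (2).

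The hard part will be the remaining pattern $\mathsf{msm}$, namely comparing $(a_{x,y}s)b_{u,v}$ with $a_{x,y}(sb_{u,v})$. Expanding both groupings with the matrix-unit rule, the left side equals $(ab)_{x,v}$ exactly when $ys=u\neq x_{0}$, and the right side equals $(ab)_{x,v}$ exactly when $su=y\neq x_{0}$; otherwise each is $0$. So the entire lemma reduces to the equivalence
\[
ys=u\neq x_{0}\quad\Longleftrightarrow\quad su=y\neq x_{0},
\]
which is precisely what (1) and (2) are designed to supply. For the forward direction, $ys\neq x_{0}$ forces $s(ys)\neq x_{0}$ by the contrapositive of the first clause of (1), and then the second clause of (1) gives $su=s(ys)=y\neq x_{0}$; for the converse, $su\neq x_{0}$ forces $(su)s\neq x_{0}$ and $(su)s=u$ by (2), i.e.\ $ys=u\neq x_{0}$. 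Once this equivalence is in hand, $\mathsf{msm}$ closes and the lemma follows. I anticipate that verifying this single compatibility — and recognizing that it is the only place the nonobvious hypotheses enter — is the crux of the argument.
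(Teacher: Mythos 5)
Your proposal is correct and follows essentially the same route as the paper: the paper likewise reduces to spanning elements, dismisses all patterns except $(a_{x,y}s)b_{u,v}=a_{x,y}(sb_{u,v})$ as trivial, and resolves that one case via axioms (1) and (2) exactly as in your equivalence $ys=u\neq x_{0}\Leftrightarrow su=y\neq x_{0}$. Your write-up is merely more explicit about the routine mixed cases, which the paper leaves unstated.
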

\begin{proof}
The only nontrivial case that we need to check is $(a_{x,y} s)b_{z,t}=a_{x,y} (sb_{z,t}),$ where $x,y,z,t\in X,$ $s\in S.$
If the left hand side is not equal to zero then $ys=z\neq x_0.$ By the property $(1)$ $sz=s(ys)=y,$ which implies  associativity.
If the right hand side is not equal to zero, then $y=sz\neq x_0.$ As above by $(2)$ $ys=(sz)s=z,$ which again implies  associativity.
This proves the Lemma.
\end{proof}
\medskip

\section{Wreath Product of Algebras}
\par Now let $\Gamma =(V,E)$ be a row finite directed graph with the set of vertices $V$ and the set of edges $E$. For an edge $e\in E$, let $s(e)$ and $r(e)\in V$
denote its source and range respectively. A vertex $v$ for which $s^{-1}(v)$ is empty is called a sink. A path $p=e_{1}...e_{n}$ in a graph $\Gamma $ is
a sequence of edges $e_{1}...e_{n}$ such that $r(e_{i})=s(e_{i+1}),$ $ i=1,2,...,n-1$. In this case we say that the path $p$ starts at the vertex $s(e_{1})$
and ends at the vertex $r(e_{n})$. We refer to $n$ as the length of the path $p$. Vertices are viewed as paths of length $0$. The Cohn algebra $C(\Gamma)$ is presented by generators $V\bigcup\limits^{.}
E\bigcup\limits^{.}E^{*}$ and relations: $v^{2}=v,\ v \in V;\ vw=wv=0;\ v,\ w \in V,\ v \neq w;$\ $s(e)e=e r(e)=e,\ e \in E;\ e^{*}=e^{*}s(e)=r(e)e^{*},\ e \in E;$\ $e^{*} f=0;\ e,f \in E,\ e\neq f;\ e^{*}e=r(e),\ e \in E$. Clearly, the set $S=\{pq^{*}| p,q \text{ are paths on } \Gamma\}\cup \{0\}$ is a semigroup with zero and $C(\Gamma)$ is a reduced semigroup algebra.

\medskip

 If $X,Y$ are nonempty subsets of the set $V$ then we let $E(X,Y)$ denote the set $\{e\in E \mid\, s(e)\in X,  r(e)\in Y\}.$
\medskip \medskip
\par Let $\mathcal{E}$ be a family of pairwise orthogonal idempotents in $A$. We introduce a set $E(V,\mathcal{E})$ of edges connecting $V$ to idempotents from $\mathcal{E}$ such that for every nonsink vertex $v \in V$ the set of edges set $e \in E (v,\mathcal{E}),s(e)=v$ is finite (possibly empty).
\medskip \medskip
 If $v$ is a sink in $\Gamma$, then we assume that $E(v,\mathcal{E})=\emptyset$.
 Now we extend the graph $\Gamma$ to a graph $\widetilde{\Gamma}(\widetilde V,\widetilde E)$, where $\widetilde V=V \cup \mathcal{E}, \widetilde E = E\cup E (V,\mathcal{E}).$
\medskip \medskip
\par Let $\mathcal{P}$ be the subset of the extended Cohn algebra $C(\widetilde{\Gamma})$, which consists of paths, that start in $\Gamma$ and end in $\mathcal{E}$, and zero, so
$\mathcal{P}=\left( \bigcup\limits_{\text{ p is a path} \atop { \text{ on $\Gamma$}}} pE(r(p),\mathcal{E})\right)\cup \{0\}$.
\medskip
\par The Cohn algebra $C(\Gamma)$ is a subalgebra of the Cohn algebra $C(\widetilde{\Gamma})$.
\medskip
\begin{lemma}\label{lem2}
$C(\Gamma)\mathcal{P}\subseteq \mathcal{P}$.
\end{lemma}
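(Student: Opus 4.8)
The plan is to reduce the statement to the action of the semigroup monomials $pq^{*}\in S$ on the path-generators of $\mathcal P$, and then to compute each such product inside $C(\widetilde\Gamma)$ using the Cohn relations. Since $C(\Gamma)$ is the reduced semigroup algebra of $S=\{pq^{*}\}\cup\{0\}$, it is spanned by the monomials $pq^{*}$, and every nonzero element of $\mathcal P$ is a single path $p'e$ with $p'$ a path on $\Gamma$ and $e\in E(r(p'),\mathcal E)$; thus it suffices to show that each monomial sends each such generator into $\mathcal P$, i.e. $(pq^{*})(p'e)\in\mathcal P$ for all such data (the products with $0$ on either side being trivially $0\in\mathcal P$).

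First I would record that a nonzero monomial $pq^{*}$ forces $r(p)=r(q)$, since $p=p\,r(p)$, $q^{*}=r(q)\,q^{*}$ and distinct vertices are orthogonal. Next I would invoke the standard multiplication rule for $q^{*}p'$ in a Cohn path algebra: matching the edges of $q$ against those of $p'$ by means of $e^{*}e=r(e)$ and $e^{*}f=0$ for $e\neq f$, one finds $q^{*}p'=0$ unless one of the two paths is an initial segment of the other, in which case $q^{*}p'=t'$ when $p'=qt'$, and $q^{*}p'=t^{*}$ when $q=p't$ (the degenerate case $p'=q$ giving the vertex $r(p')=r(q)$).

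Then I would run through the three resulting cases for $(pq^{*})(p'e)=p\,(q^{*}p')\,e$. If $q^{*}p'=0$, the product is $0\in\mathcal P$. If $q=p't$ with $t$ of positive length, then $q^{*}p'=t^{*}$, and the rightmost factor of $t^{*}$ meeting $e$ is $f^{*}$, where $f$ is the first edge of $t$; since $f$ is an edge of $\Gamma$ while $e\in E(V,\mathcal E)$, these are distinct edges of $\widetilde\Gamma$, so $f^{*}e=0$ kills the product and again gives $0\in\mathcal P$. Finally, if $p'=qt'$ (allowing $t'$ trivial), then $q^{*}p'=t'$ and the product becomes $(pt')e$; here $pt'$ is a genuine path on $\Gamma$ because $r(p)=r(q)=s(t')$, and $e\in E(r(pt'),\mathcal E)$, whence $(pt')e\in\mathcal P$.

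The main point to get right — and the only place the structure of the extended graph enters — is the analysis of $t^{*}e$: one must observe that $q^{*}$ can cancel only against the $\Gamma$-part $p'$ of the path and can never reach the terminal edge $e$ into $\mathcal E$, precisely because an edge of $\Gamma$ is never equal to an edge of $E(V,\mathcal E)$. Once this is isolated the remaining verifications are immediate, and collecting the three cases yields $(pq^{*})(p'e)\in\mathcal P$, hence $C(\Gamma)\mathcal P\subseteq\mathcal P$.
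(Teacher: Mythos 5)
Your proof is correct and takes essentially the same approach as the paper: reduce to semigroup monomials $pq^{*}$ acting on generators $p'e$ of $\mathcal{P}$ and compute via the Cohn relations, with the decisive point in both arguments being that $f^{*}e=0$ whenever $f$ is an edge of $\Gamma$, since $e\in E(V,\mathcal{E})$ is never an edge of $\Gamma$. The only difference is organizational: the paper first absorbs the $\Gamma$-path part into $C(\Gamma)$ (using $C(\Gamma)p'\subseteq C(\Gamma)$), which collapses everything to the two-case check $q^{*}e\in\{0,e\}$, whereas you carry out the three-way overlap analysis of $q^{*}p'$ explicitly.
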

\begin{proof}
We have $C(\Gamma)\mathcal{P}= \text{span} C(\Gamma)pe$, where $p$ is a path in $\Gamma$ and $e\in E(V,\mathcal{E})$ with $r(p)=s(e)$. Since $C(\Gamma)p\subseteq C(\Gamma)$, it is sufficient to show that $pq^*e\in\mathcal{P}$ for arbitrary paths $p, q$ in $\Gamma$, $r(p)=r(q)$, $s(q)=s(e)$. Furthermore, it is sufficient to prove that $q^*e \in \mathcal{P}$. If length of $q \geq 1$, then $q^*e=0$. If $q$ is a vertex, then $q^*e=e$. This proves the Lemma.
\end{proof}
\medskip
\par By Lemma \ref{lem2}, $\mathcal{P}$ can be viewed as a left $S$-module. We will define also a structure of a right $S$-action on $\mathcal{P}$ via $p.s=s^{*}p \in P$.

\medskip

\begin{lemma}\label{lem3}
The left and right actions of the semigroup $S$ on $\mathcal{P}$ satisfy $(1),$ $(2).$
\end{lemma}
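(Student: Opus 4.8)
The plan is to reduce everything to monomials and then to a single path-combinatorial computation in $C(\widetilde\Gamma)$. A nonzero element of $S$ has the form $s=pq^{*}$ with $p,q$ paths in $\Gamma$ and $r(p)=r(q)$, while a nonzero element of $\mathcal{P}$ has the form $\alpha e$ with $\alpha$ a path in $\Gamma$ and $e\in E(r(\alpha),\mathcal{E})$; the cases $s=0$ or $w=0$ make conditions $(1)$, $(2)$ trivial, so I would assume $s,w\neq 0$ throughout. The first step is to rewrite the two double products using the definition $w.s=s^{*}w$ of the right action together with associativity of $C(\widetilde\Gamma)$:
\[
s(ws)=s(s^{*}w)=(ss^{*})w,\qquad (sw)s=s^{*}(sw)=(s^{*}s)w .
\]
Here $sw$ and $s^{*}w$ already lie in $\mathcal{P}$ by Lemma \ref{lem2}, since $s,s^{*}\in C(\Gamma)$.

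Next I would simplify the idempotent factors. Using $q^{*}q=r(q)$, $p^{*}p=r(p)$ and $r(p)=r(q)$ one obtains $ss^{*}=pq^{*}qp^{*}=p\,r(q)\,p^{*}=pp^{*}$ and, symmetrically, $s^{*}s=qq^{*}$. Thus the whole lemma reduces to understanding how the elements $pp^{*}$ and $qq^{*}$ act on $w=\alpha e$, and how $p^{*}w$ and $q^{*}w$ behave.

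The heart of the argument is the following dichotomy for $p^{*}(\alpha e)$. Comparing the edges of $p$ and $\alpha$ from the source, either (a) $p$ is an initial subpath of $\alpha$, say $\alpha=p\alpha'$, in which case $p^{*}\alpha=\alpha'$ and $p^{*}(\alpha e)=\alpha' e\in\mathcal{P}$; or (b) $\alpha$ is a proper initial subpath of $p$, say $p=\alpha p'$ with $|p'|\ge 1$, in which case $p^{*}\alpha=(p')^{*}$ and $p^{*}(\alpha e)=(p')^{*}e$; or (c) neither is an initial subpath of the other and $p^{*}\alpha=0$. The key observation, and the point where the trailing edge $e\in E(V,\mathcal{E})$ is essential, is that in case (b) the first edge of $p'$ lies in $E$ and is therefore distinct from $e$, so the Cohn relation $f^{*}g=0$ for $f\neq g$ forces $(p')^{*}e=0$. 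Hence $p^{*}(\alpha e)\neq 0$ precisely in case (a), i.e. exactly when $p$ is an initial subpath of $\alpha$, and then $pp^{*}(\alpha e)=p\alpha' e=\alpha e=w$.

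Putting this together proves $(1)$: if $p$ is an initial subpath of $\alpha$ then $ws=s^{*}w=q\,p^{*}(\alpha e)=q\alpha' e\neq 0$ and $s(ws)=(ss^{*})w=w\neq 0$, while otherwise $p^{*}(\alpha e)=0$, so both $ws$ and $s(ws)$ vanish; in either case $s(ws)=x_{0}\Rightarrow ws=x_{0}$ and $s(ws)\neq x_{0}\Rightarrow s(ws)=w$. The identical computation with $q$ in place of $p$, using $s^{*}s=qq^{*}$, gives $(2)$. The only genuine obstacle is the path bookkeeping in the dichotomy above, in particular making sure case (b) collapses to $0$; once that is isolated, the two semigroup axioms fall out symmetrically.
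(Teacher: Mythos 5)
Your proof is correct and takes essentially the same route as the paper's: reduce to nonzero monomials $s=pq^{*}$, $w\in\mathcal{P}$, use $q^{*}q=r(q)=r(p)$ to collapse $s(ws)$ to $pp^{*}w$ (and $(sw)s$ to $qq^{*}w$), then decide everything by whether $p$ is an initial subpath of $w$. The only difference is that you make explicit the sub-case where the path part $\alpha$ of $w=\alpha e$ is a proper prefix of $p$, observing that the trailing edge $e\in E(V,\mathcal{E})$ forces the product to vanish --- a detail the paper's proof leaves implicit when it asserts that $pp^{*}p_{1}=0$ entails $ws=0$.
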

\begin{proof}
Let us check  property $(1).$ If $s=0$ or $x=0$ then clearly $xs=0.$ Suppose that $s=pq^*\neq 0, x=p_1\neq0.$
Then $s(xs)=ss^*p_1=pq^*qp^*p_1=pp^*p_1.$ The equality $pp^*p_1=0$ means that the path $p$ is not a beginning of the path $p_1,$ in which case $xs=qp^*p_1=0.$ If $pp^*p_1\neq 0$ then $p$ is a beginning of the path $p_1,$ $p_1=pp_2.$ Now, $s(xs)=pp^*p_1=pp^*pp_2=pp_2=p_1=x.$  Property $(2)$ is checked similarly.
This proves the Lemma.
\end{proof}
\medskip

\par Consider the algebra $C(\Gamma)+M_{\mathcal{P}\times\mathcal{ P}}(A)$ that we have defined in Section I. We extend the range function $r$ by $r(0)=1$. Now consider the subalgebra $C(\Gamma)+I$, where $I$ consists of matrices having all $(p,q)$-entries lie in the $r(p)Ar(q).$
\medskip
\par Clearly, $I$ is an ideal of the algebra $C(\Gamma)+I$. For a nonsink vertex $v\in V(\Gamma),$ consider the element $CK(v)=CK(v)^{'}-CK(v)^{''},$ where
$$CK(v)^{'}=v-\sum\limits_{f\in E(\Gamma)\atop{s(f)=v}}ff^{*},CK(v)^{''}=\sum\limits_{e \in E(v,\mathcal{E})\atop{s(e)=v}}(r(e))_{e,e}.$$

\medskip
\begin{lemma}\label{lem4}
$I\ CK(v)=CK(v)\ I=(0)$, for any nonsink vertex $v \in V(\Gamma)$.
\end{lemma}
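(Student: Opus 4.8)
The plan is to reduce the claim to spanning elements and then run a direct computation with the multiplication rules from Section I together with the right $S$-action $p\cdot s=s^{*}p$ on $\mathcal{P}$. Since $I$ is spanned by the matrices $a_{p,q}$ with $p,q\in\mathcal{P}$ and $a\in r(p)Ar(q)$, by bilinearity it suffices to show $a_{p,q}\,CK(v)=0$ for every such generator. Writing $CK(v)=CK(v)'-CK(v)''$, I would treat the two contributions separately: the product $a_{p,q}\,CK(v)'$ is governed by the right action of $v$ and of the elements $ff^{*}$ (with $s(f)=v$) on the column index $q$, via $a_{p,q}s=a_{p,\,s^{*}q}$ when $s^{*}q\neq 0$, whereas $a_{p,q}\,CK(v)''$ is an ordinary matrix product, nonzero only when the column path $q$ coincides with one of the edges $e\in E(v,\mathcal{E})$.

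The heart of the argument is a case analysis on the column path $q$. If $s(q)\neq v$, then $q\cdot v=vq=0$, the path $q$ cannot begin with any $f\in E(\Gamma)$ with $s(f)=v$, and $q$ cannot equal an edge $e\in E(v,\mathcal{E})$; hence every term vanishes and $a_{p,q}\,CK(v)=0$. If instead $s(q)=v$, then because $q$ ends in $\mathcal{E}$ exactly one of two things happens: either $q=e_{0}$ is a single edge of $E(v,\mathcal{E})$, or $q=f_{0}q'$ begins with a genuine $\Gamma$-edge $f_{0}$ with $s(f_{0})=v$. In the first case the $v$-term survives as $a_{p,q}$ in $a_{p,q}CK(v)'$, all the $ff^{*}$-terms vanish, and the single matching summand of $CK(v)''$ reproduces exactly $a_{p,q}$ (using $a\cdot r(e_{0})=a$, valid since $a\in r(p)Ar(e_{0})$), so the two cancel. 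In the second case the $v$-term is instead cancelled by the unique surviving $f_{0}f_{0}^{*}$-term, while $CK(v)''$ contributes nothing because $q$ has length $\geq 2$ whereas every $e\in E(v,\mathcal{E})$ has length $1$. In all cases $a_{p,q}\,CK(v)=0$, giving $I\cdot CK(v)=(0)$.

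The mirror identity $CK(v)\cdot I=(0)$ is obtained by the symmetric computation, now invoking the left action $s\cdot a_{p,q}=a_{sp,\,q}$ and the relation $r(e)\cdot a=a$ for entries $a\in r(e)Ar(q)$; the same two-case decomposition of the row path $p$ yields the cancellation.

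I expect the only genuine obstacle to be the bookkeeping rather than any conceptual difficulty: one must track precisely how a $\mathcal{P}$-path emanating from $v$ decomposes, and recognize that the defining relation $CK(v)'=v-\sum_{s(f)=v}ff^{*}$ only ``spends'' the vertex $v$ along the edges of $\Gamma$, so that the correction term $CK(v)''$ is exactly what absorbs the paths leaving $v$ directly into $\mathcal{E}$. The facts that the idempotents of $\mathcal{E}$ are pairwise orthogonal and that the vertices of $\mathcal{E}$ are sinks in $\widetilde{\Gamma}$ are what make the two cases both exhaustive and mutually exclusive.
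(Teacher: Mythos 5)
Your proposal is correct and follows essentially the same argument as the paper: reduction to the spanning matrices $a_{p,q}$, a case split on the column path $q$ (source different from $v$; $q$ a single edge into $\mathcal{E}$, where the $v$-term cancels against the matching $CK(v)''$ summand; $q$ of length $\geq 2$, where it cancels against the unique $f_0f_0^*$-term), and symmetry for $CK(v)\,I=(0)$. The only negligible difference is that the paper also explicitly notes the trivial subcase where $q$ is the zero element of $\mathcal{P}$, which your first case absorbs implicitly.
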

\begin{proof}
Let $p,q \in\mathcal{ P}$ and $a \in r(p) A r(q),$ where $v \in V(\Gamma)$ is not a sink. We will show that $a_{p,q}CK(v)=0$. If $q$ is the zero or $s(q)\neq v$, then $a_{p,q}v=0$ as $vq=0; a_{p,q}f=0$ as $f^{*}q=0$ and $a_{p,q}r(e)_{e,e}=0$ as $q\neq e$ (the edge $e$ starts at $v$).
\medskip
\par Now suppose that  $q$ be a nonzero path, $s(q)=v$. Suppose at first that length $(q)=1,$ that is, $q=e$ is an edge connecting $v$ with an idempotent $r(e) \in\mathcal{E}$. Then $a_{p,q}v=a_{p,q}; a_{p,q}f=0$ because $f^*q=0$;
$a_{p,q}r(e)_{e,e}=(ar(e))_{p,e}=a_{p,q}, a_{p,q}r(e^{'})_{e^{'},e^{'}}=0$ for an edge $e^{'}\in E(v,\mathcal{E}), e^{'}\neq e$.
Hence $a_{p,q}CK(v)=0$.\\
\par Now suppose that  length $(q)\geq 2$. Then $q=fq^{'},f \in E (v,V(\Gamma))$. In this case $a_{p,q}v=a_{p,q};\ a_{p,q}ff^{*}=a_{p,q};\ a_{p,q}f^{'}f^{'*}=0$, for an edge $f^{'} \in E(v,V(\Gamma)), f^{'}\neq f$.
Now $a_{p,q}r(e)_{e,e}=0,$ because $q\neq e$ and again $a_{p,q}CK(v)=0$. We proved that $CK(v)\ I=(0)$. Similarly, $I\ CK(v)=(0)$. This proves the Lemma.
\end{proof}

\begin{lemma}\label{lem5}
Let $v_{1},....,v_{m}$ be distinct vertices in $V(\Gamma)$. Let $p_{ik},q_{ik},p^{'}_{is},q^{'}_{it}$ be the paths of length $\geq 1$ in $\Gamma$, $r(p_{ik})=r(q_{ik})=r(p^{'}_{is})=r(q^{'}_{it})=v_{i}$. Assume that for each $i$ all paths $p^{'}_{is}$ are distinct; all paths $q^{'}_{it}$ are distinct and all pairs $(p_{ik},q_{ik})$ are distinct. Then the elements $p_{ik}CK(v_{i})^{'}q^{*}_{ik},\ p'_{is}CK(v_{i})^{'},\ CK(v_{i})^{'}q^{'*}_{it},\ CK(v_{i})^{'}$ in $C(\Gamma)$ are linearly independent.
\end{lemma}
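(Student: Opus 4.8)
The plan is to compute in the standard monomial basis of the Cohn algebra. Since $C(\Gamma)$ is the reduced semigroup algebra of $S=\{pq^{*}\}\cup\{0\}$, the nonzero monomials $pq^{*}$ with $r(p)=r(q)$ form an $F$-basis, and distinct pairs $(p,q)$ index distinct basis vectors. First I would expand each of the four families, using $CK(v_i)'=v_i-\sum_{s(f)=v_i}ff^{*}$ together with $p_{ik}v_i=p_{ik}$, $v_iq_{ik}^{*}=q_{ik}^{*}$, and the like. This gives
$$p_{ik}CK(v_i)'q_{ik}^{*}=p_{ik}q_{ik}^{*}-\sum_{s(f)=v_i}(p_{ik}f)(q_{ik}f)^{*},$$
and analogously $p'_{is}CK(v_i)'=p'_{is}-\sum_{s(f)=v_i}(p'_{is}f)f^{*}$, $\,CK(v_i)'q^{'*}_{it}=q^{'*}_{it}-\sum_{s(f)=v_i}f(q'_{it}f)^{*}$, and $CK(v_i)'=v_i-\sum_{s(f)=v_i}ff^{*}$. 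I call the first term of each expansion its leading monomial and the $ff^{*}$-terms its tail.

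The bookkeeping device is the length pair $(|p|,|q|)$ of a basis monomial $pq^{*}$. The leading monomials $p'_{is}$, $q^{'*}_{it}$ and $v_i$ are exactly the basis vectors with $(|p|\ge1,|q|=0)$, $(|p|=0,|q|\ge1)$ and $(|p|=0,|q|=0)$ respectively, whereas every tail monomial, and every first-family leading monomial $p_{ik}q_{ik}^{*}$, has both lengths $\ge1$. Hence in a hypothetical linear dependence the coefficient of the basis vector $p'_{is}$ can come only from the single element $p'_{is}CK(v_i)'$; since the $p'_{is}$ are pairwise distinct (within a fixed $i$ by hypothesis, across different $i$ because their ranges $v_i$ differ), this forces every coefficient of a $p'_{is}CK(v_i)'$-type element to vanish. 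Reading off the coefficients of $q^{'*}_{it}$ and of $v_i$ in the same way kills the coefficients of the $CK(v_i)'q^{'*}_{it}$-type and the $CK(v_i)'$-type elements as well.

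The remaining, genuinely delicate, step is the surviving relation $\sum_{i,k}\alpha_{ik}\bigl(p_{ik}q_{ik}^{*}-\sum_{s(f)=v_i}(p_{ik}f)(q_{ik}f)^{*}\bigr)=0$. Here the obstacle is that the leading monomial $p_{ik}q_{ik}^{*}$ of one element may coincide with a tail monomial $(p_{jl}f)(q_{jl}f)^{*}$ of another, precisely when $p_{ik}$ and $q_{ik}$ end in a common edge. I would break this overlap by induction on total length: assuming some $\alpha_{ik}\ne0$, choose an index with $\alpha_{ik}\ne0$ minimizing $|p_{ik}|+|q_{ik}|=:L$. The basis vector $p_{ik}q_{ik}^{*}$ appears as a leading monomial only in its own element (distinct pairs give distinct leading monomials, and the pairs $(p_{ik},q_{ik})$ are distinct for each $i$ and have distinguishing range across $i$), while any tail monomial equal to it must come from a pair of total length $L-2$, whose coefficient is $0$ by minimality of $L$. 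Thus the coefficient of $p_{ik}q_{ik}^{*}$ in the relation is exactly $\alpha_{ik}\ne0$, a contradiction, so all $\alpha_{ik}=0$. I expect this minimal-length cancellation to be the crux, the extraction of the other coefficients being comparatively routine.
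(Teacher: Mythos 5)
Your proof is correct, and at the top level it shares the paper's strategy: expand each element in the monomial basis $\{pq^{*}\colon r(p)=r(q)\}$ of the Cohn algebra and exhibit an extremal-length monomial that cannot cancel. But the execution differs in two substantive ways, and both differences are to your advantage. First, the decoupling of the four families: the paper asserts that the monomials occurring in the four partial sums $S_{1},\dots,S_{4}$ are pairwise distinct and concludes $S_{1}=S_{2}=S_{3}=S_{4}=0$ in one stroke. Taken literally, that disjointness claim can fail: a leading monomial $p_{ik}q_{ik}^{*}$ of the first family whose $q_{ik}$ is a single edge $f$ coincides with the tail monomial $(p'_{js}f)f^{*}$ of a second-family element whenever $p_{ik}=p'_{js}f$ (e.g.\ two vertices $v_{1},v_{2}$ with edges $f\colon v_{1}\to v_{2}$, $h\colon v_{2}\to v_{1}$, the pair $(hf,f)$ at $v_{2}$ and the path $h$ at $v_{1}$), and similar overlaps occur with tails of the third and fourth families. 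Your length-signature bookkeeping avoids this: monomials with $|q|=0$, with $|p|=0$, or with $(|p|,|q|)=(0,0)$ arise only as the leading terms $p'_{is}$, $q'^{*}_{it}$, $v_{i}$, so reading off those coefficients kills $\beta,\gamma,\xi$ rigorously, and only then do you confront the first family on its own; in this respect your write-up is not merely a variant but a repair of the paper's argument. Second, the extremal step itself: the paper takes a pair of \emph{maximal} total length and argues that its tail $(p_{i_{0}k_{0}}f)(q_{i_{0}k_{0}}f)^{*}$ survives, while you take a pair of \emph{minimal} total length and argue that its leading monomial survives; these are mirror images and both are valid. What the paper's choice buys is robustness: the maximal tail has both components of length $\geq 2$, hence meets no monomial of families two through four, which is the other natural way to fix the separation issue. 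Your minimal leading monomial lacks that property (a component can have length $1$), which is precisely why your proof needs --- and has --- the signature step performed first.
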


\begin{proof}
Suppose that $\alpha_{ik},\beta_{is},\gamma_{it},\xi_{i} \in F$ and
$$\sum_{i,k}\alpha_{ik}p_{ik}CK(v_{i})^{'}q^{*}_{ik}+\sum_{i,s} \beta_{is}p^{'}_{is}CK(v_{i})^{'}+\sum_{i,t}\gamma_{it}CK(v_{i})^{*'}q_{it}+\sum\xi_{i}CK(v_{i})^{'}=0.$$
We take $S_1=\sum\limits_{i,k}\alpha_{ik}p_{ik}CK(v_{i})^{'}q^{*}_{ik},$ $S_2=\sum\limits_{i,s} \beta_{is}p^{'}_{is}CK(v_{i})^{'},$ $S_3=\sum\limits_{i,t}\gamma_{it}CK(v_{i})^{*'}q_{it},$ and $S_4=\sum\limits_{i}\xi_{i}CK(v_{i})^{'}.$ Since semigroup elements involved in different summands $S_{i}, S_j,\ i\neq j$ are distinct, it follows that $S_1=S_2=S_3=S_4 =0.$

Suppose that not all coefficients $\alpha_{ik}$ are equal to zero. Let $d=\max\{\textrm{length}(p_{ik})+\textrm{length}(q_{ik})\ | \alpha_{ik}\neq 0\}$. Suppose that this maximum is achieved at $(i_{0},k_{0})$. Let $f \in E (v_{i},V(\Gamma))$. Then the summand $\alpha_{i_0k_0} p_{i_0k_0}ff^*q^*_{i_0k_0}$ won't cancel in $S_1.$ Hence all $\alpha_{ik}=0$. Equalities $\beta_{is}=\gamma_{it}=\xi_{i}=0$ are proved similarly.
This proves the Lemma.
\end{proof}

\par Let $J$ be the ideal of $C(\Gamma)+I$ generated by all elements $CK(v)$, where $v$ runs over all nonsink vertices from $V(\Gamma)$.
\medskip

\begin{lemma}\label{lem6}
$J\cap I=(0).$
\end{lemma}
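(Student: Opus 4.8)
The plan is to first pin down the shape of an arbitrary element of $J$, then split it into its $C(\Gamma)$- and $I$-components and show that the vanishing of the $C(\Gamma)$-component (which membership in $I$ forces) already forces the $I$-component to vanish; Lemma \ref{lem5} will carry out the bookkeeping. First I would note that each generator satisfies $v\,CK(v)\,v=CK(v)$: the vertex $v$ is a local unit for $CK(v)'=v-\sum_{s(f)=v}ff^*$ and also, via the left/right actions on $\mathcal{P}$, for $CK(v)''=\sum_{s(e)=v}(r(e))_{e,e}$. Since $v\in C(\Gamma)$, the two-sided ideal $J$ is spanned by products $r\,CK(v)\,s$ with $r,s\in C(\Gamma)+I$. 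Writing $r=r_1+r_2$, $s=s_1+s_2$ with $r_1,s_1\in C(\Gamma)$, $r_2,s_2\in I$, Lemma \ref{lem4} ($I\,CK(v)=CK(v)\,I=(0)$) kills every product involving $r_2$ or $s_2$, so every element of $J$ is a finite sum $w=\sum_j a_j\,CK(v_j)\,b_j$ with $a_j,b_j\in C(\Gamma)$.

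Next I would use that $C(\Gamma)+I=C(\Gamma)\oplus I$ as vector spaces, with $CK(v)'\in C(\Gamma)$ and $CK(v)''\in I$. Thus the $C(\Gamma)$-component of $w$ is $\sum_j a_j\,CK(v_j)'\,b_j$ and its $I$-component is $-\sum_j a_j\,CK(v_j)''\,b_j$. If $w\in J\cap I$, then its $C(\Gamma)$-component vanishes, i.e. $\sum_j a_j\,CK(v_j)'\,b_j=0$ in $C(\Gamma)$, and it remains to deduce $\sum_j a_j\,CK(v_j)''\,b_j=0$ in $I$.

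The heart of the argument is a matching expansion. For $a,b\in C(\Gamma)$ and a nonsink $v$, I would expand $a\,CK(v)'\,b$ and $a\,CK(v)''\,b$ over the monomials $x\,CK(v)'\,z^*$, respectively $x\,CK(v)''\,z^*$, where $x,z$ are paths in $\Gamma$ with $r(x)=r(z)=v$. The reduction is identical in both cases: left-multiplying a monomial $uy^*$ into $CK(v)$ forces $y=v$ (otherwise $y^*CK(v)=0$, since the leading edge of $y$ either cancels against the sum $\sum ff^*$ or is orthogonal to $v$), and dually the right factor forces the inner co-path to be trivial. What survives is governed only by the relations $v\,CK(v)=CK(v)=CK(v)\,v$ and the cancellations $f^*CK(v)'=0=CK(v)'f$ together with the corresponding identities in $\mathcal{P}$, all of which hold verbatim for $CK(v)''$. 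Hence there are scalars $\lambda^{(v)}_{x,z}$, read off from $a$ and $b$ alone and independent of the choice of $CK(v)'$ versus $CK(v)''$, with $a\,CK(v)'\,b=\sum_{x,z}\lambda^{(v)}_{x,z}\,x\,CK(v)'\,z^*$ and $a\,CK(v)''\,b=\sum_{x,z}\lambda^{(v)}_{x,z}\,x\,CK(v)''\,z^*$.

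Collecting coefficients over all $j$, I would write $\sum_j a_j\,CK(v_j)'\,b_j=\sum_{v,x,z}\Lambda_{v,x,z}\,x\,CK(v)'\,z^*$, and by the previous step the $I$-component equals $\sum_{v,x,z}\Lambda_{v,x,z}\,x\,CK(v)''\,z^*$ with the \emph{same} $\Lambda_{v,x,z}$. According to whether each of $x,z$ is trivial or of length $\ge 1$, the monomials $x\,CK(v)'\,z^*$ are precisely the four families shown linearly independent in Lemma \ref{lem5}. Therefore $\sum_j a_j\,CK(v_j)'\,b_j=0$ forces $\Lambda_{v,x,z}=0$ for all $v,x,z$, whence $\sum_j a_j\,CK(v_j)''\,b_j=0$ as well, so $w=0$ and $J\cap I=(0)$. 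The main obstacle is the matching-coefficient computation: one must verify that multiplication on either side by arbitrary elements of $C(\Gamma)$ reduces $CK(v)'$ and $CK(v)''$ to the same linear combinations of the $x\,CK(v)\,z^*$, so that the independence furnished by Lemma \ref{lem5} transfers from the $C(\Gamma)$-part to the $I$-part; equivalently, that $x\,CK(v)'\,z^*\mapsto x\,CK(v)''\,z^*$ extends to a well-defined linear map sending $\sum_j a_j\,CK(v_j)'\,b_j$ to $\sum_j a_j\,CK(v_j)''\,b_j$, which applied to the relation from the second step closes the proof.
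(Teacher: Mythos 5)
Your proof is correct and takes essentially the same route as the paper: use Lemma \ref{lem4} to reduce elements of $J$ to combinations of $a\,CK(v)\,b$ with $a,b\in C(\Gamma)$, use the annihilation identities (vertices act as local units, $g^{*}CK(v)=CK(v)g=0$) to reach the normal form of scalar combinations of $p\,CK(v)\,q^{*}$, observe that membership in $I$ forces the $C(\Gamma)$-component to vanish, and invoke Lemma \ref{lem5}. The only difference is organizational: the paper keeps $CK(v)=CK(v)'-CK(v)''$ intact during the reduction, so the agreement of coefficients between the $C(\Gamma)$-part and the $I$-part is automatic, whereas you split first and then verify this agreement by your ``matching expansion'' argument.
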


\begin{proof}
It is easy to see that for any edge $g \in E(\Gamma)$ we have $g^{*}CK(v)=CK(v)g=0$. Hence an arbitrary element from the ideal $J$ can be represented as $$x=\sum_{i,k}\alpha_{ik}p_{ik}CK(v_{i})q^{*}_{ik}+\sum_{i,s}\beta_{is}p^{'}_{is}CK(v_{i})+\sum_{i,t}\gamma_{it}CK(v_{i})q^{'*}_{it}+\sum_{i}\xi_{i}CK(v_{i}),$$ where $\alpha_{ik},\beta_{is},\gamma_{it},\xi_{i} \in F;p_{ik},q_{ik},p^{'}_{is},q^{'}_{it}$ are paths on $\Gamma$ of length $\geq 1$. If this element lies in $I$ then
$$\sum_{i,k}\alpha_{ik}p_{ik}CK(v_{i})^{'} q^{*}_{ik}+\sum_{i,s}\beta_{is}p^{'}_{is}CK(v_{i})^{'}+\sum_{i,t}\gamma_{it}CK(v_{i})^{'}q^{'*}_{it}+\sum\xi_{i}CK(v_{i})^{'}=0.$$ By Lemma \ref{lem5} $x=0$. This proves the Lemma.
\end{proof}

\par Following G. Abrams and Z. Mesyan [2],  we may view the Leavitt path algebra $L(\Gamma)$ as the quotient algebra $C(\Gamma)/N$, where $N$ is the ideal of $C(\Gamma)$ generated by $CK(v)'$ for all nonsink $v\in V$. Now let $B=(C(\Gamma)+I)/J$. Clearly the algebra $B$ has an ideal $(I+J)/J\cong \sum\limits_{p,q \in \mathcal{P}}(r(p)A r(q))_{p,q}$ and the quotient of $B$ modulo this ideal is isomorphic to the Leavitt path algebra $L(\Gamma)$. We will denote the algebra $B$ as $A\,wr\, L(\Gamma)$ and call it the wreath product of the algebra $A$ and the Leavitt path algebra $L(\Gamma)$. Remark that the construction $A\,wr\, L(\Gamma)$ depends on the set of idempotents $\mathcal{E}$ and the set of edges $E(V,\mathcal{E})$.

\begin{proposition}\label{prop1}
 If $\Gamma$ is a finite graph and the algebra $A$ is finitely generated then $A\,wr\, L(\Gamma)$ is finitely generated.
\end{proposition}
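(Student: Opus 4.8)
The plan is to produce an explicit finite generating set for $B=A\,wr\,L(\Gamma)=(C(\Gamma)+I)/J$. Since a homomorphic image of a finitely generated algebra is finitely generated, it is enough to show that $C(\Gamma)+I$ is finitely generated. Because $\Gamma$ is finite, the Cohn algebra $C(\Gamma)$ is already finitely generated, by the finite set $V\cup E\cup E^{*}$; these will be among our generators and they produce the whole $C(\Gamma)$-part (and, after passing to the quotient, the copy of $L(\Gamma)$). Moreover, since $V$ is finite and each nonsink vertex emits only finitely many edges into $\mathcal{E}$, the set $E(V,\mathcal{E})$ is finite; consequently every nonzero element of $\mathcal{P}$ has the form $p=ue$ with $u$ a path in $\Gamma$ and $e\in E(V,\mathcal{E})$, and only finitely many idempotents of $\mathcal{E}$ occur as ranges.

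The real content is to generate the ideal $I\cong\bigoplus_{p,q\in\mathcal{P}}(r(p)Ar(q))_{p,q}$ with finitely many elements. The key device is the distinguished index $0\in\mathcal{P}$ together with the convention $r(0)=1$: the $(0,0)$-entry is unconstrained, so the matrix units $a_{0,0}$ range over a full, \emph{uncornered} copy of $A$. Fixing generators $a_{1},\dots,a_{n}$ of $A$, the elements $(a_{1})_{0,0},\dots,(a_{n})_{0,0}$ generate the subalgebra $\{a_{0,0}:a\in A\}\cong A$ by ordinary matrix multiplication. To these I adjoin the finitely many ``transfer'' matrix units $(r(e))_{e,0}$ and $(r(e))_{0,e}$, one pair for each $e\in E(V,\mathcal{E})$; each lies in $I$ since $r(e)\in r(e)A\,r(0)=r(e)A$.

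The transfers can be pushed along arbitrary paths using the $S$-action defined in Sections 1--2. Left multiplication by an edge $g\in E$ sends $g\,(r(e))_{x,0}=(r(e))_{gx,0}$ whenever $gx\neq 0$, so iterating along a path $u=g_{1}\cdots g_{l}$ gives $u\,(r(e))_{e,0}=(r(e))_{ue,0}$; as $u$ ranges over all paths in $\Gamma$ this produces $(r(p))_{p,0}$ for every nonzero $p\in\mathcal{P}$. Dually, using $p.s=s^{*}p$, right multiplication by the duals $g^{*}$ yields $(r(e))_{0,e}\,u^{*}=(r(e))_{0,ue}$, hence $(r(q))_{0,q}$ for every nonzero $q\in\mathcal{P}$. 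Finally, for arbitrary $p,q\in\mathcal{P}$ and $a\in A$,
\[
(r(p))_{p,0}\,a_{0,0}\,(r(q))_{0,q}=(r(p)\,a\,r(q))_{p,q},
\]
and as $a$ runs through $A$ the entry $r(p)\,a\,r(q)$ runs through all of $r(p)Ar(q)$. Thus every matrix unit of $I$ lies in the subalgebra generated by our finite list, so $C(\Gamma)+I$, and therefore $B$, is finitely generated.

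I expect the main obstacle to be precisely the one this argument is designed to circumvent: one cannot simply place the generators of $A$ into ``diagonal'' matrix units $(r(e)\,a_{k}\,r(e'))_{e,e'}$, because a word $r(e)\,a\,r(e')$ in $A$ cannot in general be rebuilt from such pieces (the available idempotents need not sum to $1$), and the corners $r(e)A\,r(e')$ of a finitely generated algebra need not be finitely generated. Recognizing that the index $0$ with $r(0)=1$ supplies an uncornered copy of $A$, from which all corners are recovered by conjugating with the transfer idempotents, is the crux; the remaining verifications are routine bookkeeping with the matrix-unit and $S$-action formulas.
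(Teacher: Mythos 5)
Your proof is correct and takes essentially the same approach as the paper: the paper's own one-line proof exhibits exactly your generating set $V\cup E\cup E^{*}\cup\{(a_i)_{0,0}\}\cup\{(r(e))_{e,0},\,(r(e))_{0,e}\mid e\in E(V,\mathcal{E})\}$ for $C(\Gamma)+I$ and then passes to the quotient by $J$. You have simply supplied the verification (pushing the transfer units along paths via the $S$-action and conjugating the uncornered copy $A_{0,0}$ to recover every corner $r(p)Ar(q)$) that the paper leaves to the reader.
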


\par Indeed, if $A=\langle a_1, a_2, \cdots, a_m \rangle$, then the algebra $C(\Gamma)+I$ is generated by $V, E, E^*$ and matrices $(a_i)_{00}, (r(e))_{e,0},(r(e))_{0,e},\, e\in E(V,\mathcal{E}).$

\medskip\medskip
Let us discuss some applications of the wreath product construction to the theory of Leavitt path algebras. A subset $W\subseteq V$ is said to be hereditary if $v \in W$ implies $r(s^{-1}(v))\subseteq W$ [1]. The subset $W$ is said to be saturated if $r(s^{-1}(v))\subseteq W$  implies that $v\in W,$ for every non-sink vertex $v\in V$ [1]. The Leavitt path algebra $L(\Gamma)$ has a natural $\mathbb{Z}$-gradation: $deg(v)=0, deg(e)=1, deg(e^*)=-1$.  A Leavitt path algebra  $L(\Gamma)$ is graded simple if and only if  $\Gamma$ does not contains proper hereditary and saturated subsets (see [7]).

\medskip
\par Let $W$ be a hereditary and saturated subset of $V$. The graph $\Gamma(W)=(W, E(W,W))$ is the restriction of the graph $\Gamma$ . Consider the graph $\Gamma / W$ with the set of vertices $V\setminus W$ and the set of edges $E\setminus E(V,W)$. The set $\mathcal{E}= V\setminus W$ in $L(\Gamma/ W)$ is the set of pairwise orthogonal idempotents. Vertices from $W$ are connected to idempotents $\mathcal{E}$ via the edges from $E(W,V\setminus W)$.
\medskip

\begin{proposition}\label{prop2}
$L(\Gamma)\cong L(\Gamma(W)) \,wr\, L(\Gamma/W).$
\end{proposition}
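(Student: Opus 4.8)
The plan is to produce the isomorphism as a pair of mutually inverse homomorphisms, exploiting the presentation of Cohn and Leavitt algebras and the description $B=(C(\Gamma/W)+I)/J$ of the wreath product. In this identification one takes $A=L(\Gamma(W))$, lets the family $\mathcal{E}$ of orthogonal idempotents of $A$ be the vertices $w\in W$, and takes the connecting edges $E(V\setminus W,\mathcal{E})$ to be the edges of $\Gamma$ that leave $V\setminus W$ and enter $W$. Since $W$ is hereditary there are no edges from $W$ to $V\setminus W$, so the edges of $\Gamma$ split as $E(W,W)\sqcup E(V\setminus W,W)\sqcup E(V\setminus W,V\setminus W)$, the first lot being the edges of $\Gamma(W)$ and the last the edges of $\Gamma/W$. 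The point I would stress from the outset is the role of the zero path $0\in\mathcal{P}$, whose range is set to $r(0)=1$: its corner $r(0)Ar(0)=A$ is exactly what allows all of $L(\Gamma(W))$ — in particular the vertices inside $W$ and the edges inside $W$ — to be encoded in the construction, and it is what makes the forthcoming map surjective.

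First I would define $\theta\colon C(\Gamma/W)+I\to L(\Gamma)$. On the Cohn part I send each generator of $C(\Gamma/W)$ to the same-named element of $L(\Gamma)$; this is a homomorphism because the defining relations of $C(\Gamma/W)$ hold among these elements. On $I$ I send $a_{p,q}\mapsto p\,a\,q^{*}$, with the convention that the zero index is read as the empty word, so $a_{0,0}\mapsto a$, $a_{p,0}\mapsto p\,a$ and $a_{0,q}\mapsto a\,q^{*}$, where $a\in r(p)Ar(q)$ is viewed in $L(\Gamma)$ through the canonical map $L(\Gamma(W))\to L(\Gamma)$. In the opposite direction I would define $\phi\colon C(\Gamma)\to B$ on generators by $v\mapsto\overline v$ and $f\mapsto\overline f,\ f^{*}\mapsto\overline{f^{*}}$ for $v\in V\setminus W,\ f\in E(V\setminus W,V\setminus W)$; by $w\mapsto\overline{(w)_{0,0}}$ and $g\mapsto\overline{(g)_{0,0}},\ g^{*}\mapsto\overline{(g^{*})_{0,0}}$ for $w\in W,\ g\in E(W,W)$; and by $e\mapsto\overline{(r(e))_{e,0}},\ e^{*}\mapsto\overline{(r(e))_{0,e}}$ for each connecting edge $e\in E(V\setminus W,W)$, where bars denote images in $B$.

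The verifications then run as follows. For $\theta$ the only nonroutine identities are the mixed products $s\cdot a_{x,y}$, $a_{x,y}\cdot s$ and $a_{x,y}\cdot b_{z,t}$; these are respected precisely because of properties (1),(2) of the $S$-action (Lemma \ref{lem3}) together with the vanishing $q^{*}a=0$ and $a\,q=0$ for a path $q$ in $\Gamma/W$ and $a\in L(\Gamma(W))$, which holds because $W$ and $V\setminus W$ are disjoint vertex sets. A direct computation gives $\theta(CK(v))=v-\sum_{s(g)=v}gg^{*}$, the Cuntz–Krieger relator of $\Gamma$ at a nonsink $v\in V\setminus W$ — the two summands $CK(v)'$ and $CK(v)''$ reassembling the edges out of $v$ that stay in $V\setminus W$ and those that enter $W$ — so $\theta(J)=0$ and $\theta$ descends to $\overline\theta\colon B\to L(\Gamma)$. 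For $\phi$ one checks the Cohn relations of $\Gamma$ among the images (again from the matrix rules and the zero index) and that $\phi$ kills each generator $v-\sum_{s(f)=v}ff^{*}$ of $N$: for $v\in V\setminus W$ this image is $\overline{CK(v)'-CK(v)''}=\overline{CK(v)}=0$, while for a nonsink $w\in W$ all edges out of $w$ lie in $\Gamma(W)$ by heredity, so the image is $\bigl(w-\sum_{s(g)=w}gg^{*}\bigr)_{0,0}=0$ because that is a Leavitt relation already holding in $A$. Thus $\phi$ descends to $\overline\phi\colon L(\Gamma)\to B$, and one verifies $\overline\theta\,\overline\phi=\mathrm{id}$ and $\overline\phi\,\overline\theta=\mathrm{id}$ on the respective generating sets, which is immediate from the definitions.

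I expect the main obstacle to be bookkeeping rather than conceptual: one must treat the zero index consistently throughout (it is the device that encodes the interior of $W$ and forces surjectivity) and verify the mixed-product relations for both maps. The conceptual heart, which I would make explicit, is the splitting of the Cuntz–Krieger relation at an exterior vertex $v\in V\setminus W$ into a part $CK(v)'$ living in $C(\Gamma/W)$ and a part $CK(v)''$ living in the matrix ideal $I$, matching exactly the partition of the edges out of $v$ into those remaining in $V\setminus W$ and those entering $W$; and, at an interior vertex $w\in W$, the fact that heredity makes its relation a relation already satisfied inside $A=L(\Gamma(W))$ placed in the $(0,0)$-corner.
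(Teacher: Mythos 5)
Your proposal is correct in structure, but it takes a genuinely different route from the paper. The paper's proof is a normal-form argument: it asserts that every element of $L(\Gamma)$ has a unique representation $a'+\sum pa_{pq}q^*+\sum pb_p+\sum c_qq^*+d$ (with $p,q$ ranging over paths whose first vertex in $W$ is their terminal vertex, with $a_{pq},b_p,c_q,d$ in the subalgebra generated by $W\cup E(W,W)$, and with $a'$ supported away from $W$), and then defines the isomorphism $L(\Gamma)\to L(\Gamma(W))\,wr\,L(\Gamma/W)$ directly on this normal form by $a_{pq}\mapsto (a_{pq})_{p,q}$, $b_p\mapsto (b_p)_{p,0}$, $c_q\mapsto (c_q)_{0,q}$, $d\mapsto (d)_{0,0}$; both well-definedness and bijectivity then rest on the uniqueness of that representation, which the paper asserts without proof. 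Your pair of mutually inverse homomorphisms built from the presentations --- $\theta$ on $C(\Gamma/W)+I$ killing $J$, and $\phi$ on $C(\Gamma)$ killing $N$ --- sidesteps the normal form entirely: well-definedness comes from the universal property of the Cohn presentation, and bijectivity from exhibiting a two-sided inverse, at the cost of the relation-checking you describe (mixed products, the splitting $CK(v)=CK(v)'-CK(v)''$ at exterior vertices, and the interior relations sitting in the $(0,0)$ corner). The two proofs produce the same map: your $\overline\phi$ is exactly the paper's isomorphism, and your $\overline\theta$ is the inverse the paper never writes down. Your identification of the data ($A=L(\Gamma(W))$, $\mathcal{E}=W$, connecting edges $E(V\setminus W,W)$) agrees with the paper's proof.

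One step needs to be made explicit, because it is the only place where the hypothesis that $W$ is \emph{saturated} enters (you invoke only heredity): when you claim that $\phi$ kills the Cuntz--Krieger relator at a nonsink $v\in V\setminus W$ of $\Gamma$ via $\phi\bigl(v-\sum_{s(f)=v}ff^*\bigr)=\overline{CK(v)}=0$, you are using that $CK(v)$ lies in $J$. But $J$ is generated by $CK(u)$ only for $u$ a \emph{nonsink of the base graph} $\Gamma/W$. If $v$ were a nonsink of $\Gamma$ all of whose edges entered $W$, then $v$ would be a sink of $\Gamma/W$: the element $CK(v)$ would not be among the generators of $J$ (indeed the wreath product construction explicitly requires $E(v,\mathcal{E})=\emptyset$ at sinks of the base graph, so the construction would not even apply), and $\phi$ would not descend to $L(\Gamma)$. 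Saturation of $W$ is precisely the statement that no such $v$ exists, i.e., that every nonsink of $\Gamma$ lying in $V\setminus W$ remains a nonsink of $\Gamma/W$. With that one sentence added, your argument is complete.
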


\begin{proof}
Let $\mathcal{P}$ be the set of pathes on the graph $\Gamma$ such that $s(p)\in V\setminus W,$ $r(p)\in W,$ $r(p)$ is the first vertex on the path $p$ that lies in $W.$
An arbitrary element  $a$ from $L(\Gamma)$ can be uniquely represented as $$a=a'+\sum pa_{pq}q^*+\sum pb_{p}+\sum c_q q^*+d$$
where $a'$ is a linear combination of elements $p_1p^*_2$ such that no vertex on $p_1,\, p_2$ lies in $W ;$ $p,\, q\in P\,;$ $a_{pq}, b_p, c_q, d$ lie in the subalgebra of
$L(\Gamma)$ generated by $W,\, E(W,W).$ Let $A=L(\Gamma(W)),\, \Gamma(W)=(W, E(W,W)),$ $\mathcal{E}=W,$ $E(V, \mathcal{E})=E(V,W).$
Straightforward verification show that the mapping $L(\Gamma)\rightarrow A\, wr\, L(\Gamma/W),$ $$a'+\sum pa_{pq}q^*+\sum pb_{p}+\sum c_q q^*+d\mapsto a'+\sum_{p,q} (a_{pq})_{p,q}+\sum_{p}(b_{p})_{p,0}+\sum_{q} (c_q)_{0,q}+(d)_{00}$$ is an isomorphism of algebras.
\end{proof}

\medskip
From [5,6], it follows that the Leavitt path algebra of a finite graph has polynomial growth if and only if it is an iterated wreath product of disjoint unions of cycles and trees.

\medskip
Following [3], we call a vertex $v$ in a connected graph $\Gamma(V,E)$ a \textit{balloon} over a nonempty subset $W$ of $V$ if (1) $v\notin W,$ (2) there is a loop $C\in E(v,v),$ (3) $E(v,W)\neq\emptyset,$ (4) $E(v,V)=\{C\}\cup E(v,W),$ and (5) $E(V,v)=\{C\}.$ If $V$ contains a vertex $v$ which is a balloon over $V\setminus \{v\}$, then we say the graph $\Gamma$ is a balloon extension. Now let $\Gamma$ be a graph and $\Gamma'$ be a balloon extension. Then $L(\Gamma')\cong L(\Gamma) \,wr\, L(C)$, where $C$ is a loop.

\section{Affinizations of countable dimensional algebras }
By an affinization we mean an embedding of a countable dimensional algebra in an affine ( that is, finitely generated ) algebra with preservation of certain properties.
\bigskip
\par In $1981$ K. Beidar [8] constructed an affine algebra with a non-nil Jacobson radical, answering an old question of S. Amitsur. Beidar's construction was modified and generalized by L. Small ( see [10] ).
\bigskip

The next step was done by J. Bell [9] who constructed affinizations of small Gelfand-Kirillov dimensions.
\medskip
\newline Answering a question of K. Zhevlakov, E. Zelmanov [14] constructed an affine algebra with a non-nilpotent locally nilpotent radical.
\medskip

In this section we show how to construct different  affinizations using wreath products.
\medskip

\begin{figure}
  \includegraphics[width=.1\textwidth]{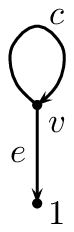}\\
  \caption{}\label{1}
\end{figure}

\par Let $\Gamma(\{v\},\{c\})$ be a loop. The Leavitt path algebra $L(\Gamma)$ is $$\sum_{i\geq 0} Fc^i+\sum_{i\geq 0} F(c^*)^i\cong F[t^{-1},t].$$  Let $A$ be an associative algebra with $1.$
Let $\mathcal{E}=\{1\},$ the only vertex $v$ of the graph $\Gamma$ is connected to $1$ by the edge $e,$ see figure 1 .
Then the set $\mathcal{P}$ from the construction of the wreath product $A\,wr\,L(\Gamma)$ is $\mathcal{P}=\{c^i e, \, i\geq 0\}\cup \{0\},$ $A\,wr\,L(\Gamma)=F[c,c^*]+M_{\mathcal{P}\times\mathcal{P}}(A).$
We will identify the set $\mathcal{P}$ with the set of nonnegative integers $\mathbb{N},$ $0\leftrightarrow 0,$ $c^i e\leftrightarrow i+1,\, i\geq 0.$ Then  $A\,wr\,L(\Gamma)$ can be identified with $F[t^{-1},t]+M_{\mathbb{N}\times \mathbb{N}}(A),$ where $F[t^{-1},t]$ is an isomorphic copy of the vector space of the algebra of Laurent polynomials ( it is not a subalgebra in $A\,wr\,L(\Gamma)$, see [6]);
$t^{-1}t=v,\, tt^{-1}=v-(1)_{1,1}; ta_{i,j}=a_{i+1,j}, t^{-1}a_{i,j}=a_{i-1,j}$ for $i\geq 1,$ $t^{-1}a_{0,j}=0;$ $a_{i,j}t=a_{i,j-1}$ for $j\geq 1,$ $a_{i,0}t=0,$ $a_{i,j}t^{-1}=a_{i,j+1}.$

\bigskip

\par Let $\widetilde{M_{\mathbb{N}\times \mathbb{N}}(A)}$ denote the algebra of infinite $\mathbb{N}\times \mathbb{N}$ matrices over $A$ having only finitely many nonzero entries in each row and in each
column. Clearly, $M_{\mathbb{N}\times \mathbb{N}}(A)\triangleleft \widetilde{M_{\mathbb{N}\times \mathbb{N}}(A)}.$
\par The algebra $L(\Gamma)+\widetilde{M_{\mathbb{N}\times \mathbb{N}}(A)}$ is defined in the same way as the algebra $A\, wr\, L(\Gamma)=L(\Gamma)+M_{\mathbb{N}\times \mathbb{N}}(A).$
Moreover, $M_{\mathbb{N}\times \mathbb{N}}(A)\triangleleft L(\Gamma)+\widetilde{M_{\mathbb{N}\times \mathbb{N}}(A)}.$
\bigskip
\par Suppose that the algebra $A$ is generated by a countable set $a_0,a_1,\cdots.$ Let $a=\sum\limits_{i=0}^\infty (a_i)_{i,i}\in \widetilde{M_{\mathbb{N}\times \mathbb{N}}(A)}.$ Let $B$ denote the affine algebra generated by $t,t^{-1},$ $a,$ $(1)_{0,0}.$ The following assertion is straightforward.

\begin{proposition}\label{prop3}
$F[t^{-1},t]+M_{\mathbb{N}\times \mathbb{N}}(A)\subset B\subset F[t^{-1},t]+\widetilde{M_{\mathbb{N}\times \mathbb{N}}(A)}.$
\end{proposition}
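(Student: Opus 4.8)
The plan is to verify the two inclusions separately, and the right-hand one is essentially free. For $B\subseteq F[t^{-1},t]+\widetilde{M_{\mathbb{N}\times\mathbb{N}}(A)}$ I would simply note that the right-hand side is an algebra (it is defined exactly as $A\,wr\,L(\Gamma)$ but over the larger matrix algebra) which already contains all four generators of $B$: indeed $t,t^{-1}\in F[t^{-1},t]$, while $a\in\widetilde{M_{\mathbb{N}\times\mathbb{N}}(A)}$ and $(1)_{0,0}\in M_{\mathbb{N}\times\mathbb{N}}(A)\subseteq\widetilde{M_{\mathbb{N}\times\mathbb{N}}(A)}$. Since $B$ is by definition the subalgebra generated by these elements, the inclusion is immediate.

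The substance is in the left-hand inclusion, and there the main task is to show $M_{\mathbb{N}\times\mathbb{N}}(A)\subseteq B$; the inclusion $F[t^{-1},t]\subseteq B$ follows at once afterwards, since $v=t^{-1}t$ and every power $t^{i}$, $(t^{-1})^{i}$ is a product of the generators $t,t^{-1}$, and these elements span the vector-space copy $F[t^{-1},t]$. To reach the matrix part I first produce all matrix units with entry $1$: using $a_{i,j}t^{-1}=a_{i,j+1}$ and $ta_{i,j}=a_{i+1,j}$ one computes $t^{k}(1)_{0,0}(t^{-1})^{l}=(1)_{k,l}$, so every $(1)_{k,l}\in B$. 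In particular $(1)_{i,i}\in B$, and because $a=\sum_{i}(a_i)_{i,i}$ is diagonal, ordinary matrix multiplication gives $(1)_{i,i}\,a=(a_i)_{i,i}\in B$ for each $i$.

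Next I would isolate each generator at the $(0,0)$ slot. Applying $t^{-1}$ to the row index $i$ times and $t$ to the column index $i$ times yields $(t^{-1})^{i}(a_i)_{i,i}\,t^{i}=(a_i)_{0,0}\in B$; each of these $2i$ shifts is an honest shift because we start from index $i$ and decrease to $0$, so the annihilation rules $t^{-1}a_{0,j}=0$ and $a_{i,0}t=0$ are never triggered. Together with $(1)_{0,0}\in B$, closing under products and $F$-linear combinations then gives $(x)_{0,0}\in B$ for every $x\in A$, since every element of $A$ is an $F$-linear combination of words in the $a_i$. Finally $t^{k}(x)_{0,0}(t^{-1})^{l}=(x)_{k,l}\in B$ for all $k,l\geq 0$ (here the indices only increase, so again no entry is killed), and these matrix units span $M_{\mathbb{N}\times\mathbb{N}}(A)$, completing that inclusion.

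The only point demanding care — and the reason the authors can call the assertion straightforward — is precisely this bookkeeping of the partial shift operators: each extraction lowers an index exactly from $i$ down to $0$ and each reconstruction raises indices, so in every step $t$ and $t^{-1}$ act as genuine shifts rather than projecting to zero. Once one checks that the ranges stay within the allowed region, both inclusions are established and the chain $F[t^{-1},t]+M_{\mathbb{N}\times\mathbb{N}}(A)\subset B\subset F[t^{-1},t]+\widetilde{M_{\mathbb{N}\times\mathbb{N}}(A)}$ follows.
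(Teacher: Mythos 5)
Your proof is correct, and it is exactly the verification the paper omits: the authors simply declare Proposition \ref{prop3} ``straightforward'' and give no argument. Your bookkeeping with the shift rules --- producing $(1)_{k,l}=t^{k}(1)_{0,0}(t^{-1})^{l}$, cutting out $(a_i)_{i,i}=(1)_{i,i}a$, sliding it to $(a_i)_{0,0}=(t^{-1})^{i}(a_i)_{i,i}t^{i}$, closing under products to get all $(x)_{0,0}$, and shifting back out to $(x)_{k,l}$, together with the observation that the right-hand side is an algebra containing the four generators --- is precisely the intended argument, carried out consistently with the multiplication rules $ta_{i,j}=a_{i+1,j}$, $t^{-1}a_{0,j}=0$, $a_{i,0}t=0$, $a_{i,j}t^{-1}=a_{i,j+1}$ stated in the paper.
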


\par Now let $\text{ Rad }(A)$ be the Jacobson or the locally nilpotent radical of the algebra $A$ (see [11] ). Then $M_{\mathbb{N}\times \mathbb{N}}(\text{ Rad }(A))=\text{ Rad }(M_{\mathbb{N}\times \mathbb{N}}(A))\subseteq \text{ Rad }(B).$

\par If $F$ is a countable field then there exists a countably generated commutative domain $A_0,$ which is equal to its Jacobson radical. For example, in the field of rational functions $F(t)$  consider the
subalgebra $A_0=\left\{ \frac{f(t)}{g(t)}\mid  f(0)=0, g(0)=1\right\}.$ It is easy to see that the algebra $A_0$ is countably dimensional and equal to its Jacobson radical. Let $A=F.1+A_0.$
\bigskip
\par The algebra $B$ of Proposition \ref{prop3} is affine. The Jacobson
radical of $B$ contains $M_{\mathbb{N}\times \mathbb{N}}(A)$ and therefore is not nil.
\bigskip
\par Ju. M. Rjabuhin [13] constructed a prime countably generated locally nilpotent algebra $A_0.$ Let $A= F.1+A_0.$
Then the locally nilpotent radical of the affine algebra $B\subset F[t^{-1},t]+\widetilde{M_{\mathbb{N}\times \mathbb{N}}(A)}$ contains $M_{\mathbb{N}\times \mathbb{N}}(A_0)$ and therefore is not nilpotent.

\section*{Acknowledgement}
The authors would  like to express their appreciation to professor  S. K. Jain for carefully reading the manuscript and for offering his comments.
\bigskip
\newline The authors are grateful to the referee for numerous helpful comments.
\bigskip
\newline This paper was funded by King Abdulaziz University, under grant No. (57-130-35-HiCi). The authors, therefore, acknowledge technical and financial support of KAU.


\begin{thebibliography}{9}
\bibitem{AA} Abrams G., Aranda Pino G. \textit{The Leavitt path algebra of a graph.} J. Algebra 293(2): 319-334 (2005).

\bibitem{AM} Abrams G., Mesyan Z., \textit{Simple Lie algebra arising from Leavitt path algebra.} Journal of pure and applied algebra, 216: 2303-2313 (2012).

\bibitem{AAAHA} Alahmadi A., Alsulami H., \textit{Simplicity of Lie algebra of skew elements of Leavitt path algebra.} submitted

\bibitem{AAAHA2} Alahmadi A., Alsulami H., \textit{On the simplicity of Lie algebra of Leavitt path algebra.} submitted

\bibitem{AAJE} Alahmedi A., Alsulami H., Jain S. K., Zelmanov E. \textit{Leavitt path algebras of finite Gelfand-Kirillov dimension.} J. Algebra Appl. 11(6) (2012).

\bibitem{AAJE2} Alahmedi A., Alsulami H., Jain S. K., Zelmanov E. \textit{Structure of Leavitt path algebras of polynomial growth.} PNAS 110(38): 15222-15224 (2013).

\bibitem{AMP} Ara P., Moreno M., Padro E. \textit{Nonstable K-theory for graph algebras .} J. Algebra Represent Theory. (10)157-178 (2007).

\bibitem{BK} Beidar K.I., \textit{Radicals of nitely generated algebras.} Uspekhi Mat. Nauk 36(1981), no. 6 (222), 203 - 204.

\bibitem{BL} Bell, J. \textit{Examples in finite Gelfand-Kirillov dimension.} J. Algebra 263 (2003), no. 1, 159 - 175.

\bibitem{BS} Bell, J.; Small, L. \textit{A question of Kaplansky. Special issue in celebration of Claudio Procesi's 60th birthday.} J. Algebra 258 (2002), no. 1, 386 - 388.

\bibitem{IH} I.N. Herstein, \textit{Noncommutative rings.} Reprint of the 1968 original. With an afterword by L.W. Small. Carus Mathematical Monographs, 15. Mathematical
Association of America, Washington, DC, 1994

\bibitem{KM} Kargapolov M, Merzlyakov Y, \textit{Fundamentals of the theory of groups.} New York, Springer-Verlag (1979).

\bibitem{RJ} Rjabuhin, Ju. M., \textit{A certain class of locally nilpotent rings.} (Russian) Algebra i Logika 7 1968 no. 5, 100–108.

\bibitem{ZE} Zelmanov, E.I., \textit{An example of a finitely generated prime ring.} Sibirsk Mat. Z. 20 (1979), 423 (in Russian); English trans. Siberian Math. J. 20
(1979), 303–304.
\end{thebibliography}
\end{document}